\documentclass[12pt,twoside]{amsart}
\usepackage{amssymb}
\usepackage{verbatim}
\usepackage{amsmath}
\usepackage{bm}
\usepackage{a4wide}
\usepackage[latin1]{inputenc}
\usepackage[T1]{fontenc}
\usepackage{times}
\usepackage{amssymb,latexsym}
\usepackage{enumerate}
\usepackage[stable]{footmisc}
\usepackage{color}
\usepackage[colorlinks,linkcolor=black,citecolor=black, urlcolor=black]{hyperref}
\newtheoremstyle{mythm}{2ex plus 1ex minus .2ex}{2ex plus 1ex minus .2ex}     {\itshape}{}{\bfseries}{.}{0.7em}{}
\theoremstyle{mythm}
\newtheorem{prop}{Proposition}
\newtheorem{thm}[prop]{Theorem}
\newtheorem{lem}[prop]{Lemma}

\newtheoremstyle{mythm1}{2ex plus 1ex minus .2ex}{2ex plus 1ex minus .2ex}     {\normalfont}{}{\bfseries}{.}{1em}{} 
\theoremstyle{mythm1}

\begin{document}

\thanks{Supported by National Natural Science Foundation of China, No. 12271101}

\address [Yuanpu Xiong] {Department of Mathematical Sciences, Fudan University, Shanghai, 200433, China}
\email{ypxiong@fudan.edu.cn}

\title{Minimal $L^2$ and $L^p$ Ohsawa-Takegoshi extensions}
\author{Yuanpu Xiong}

\date{}

\begin{abstract}
We find a precise relationship between the minimal extensions in $L^2$ and $L^p$ Ohsawa-Takegoshi extension theorems. This relationship also gives another proof to the $L^p$ version of the Ohsawa-Takegoshi extension theorem, which is different from the original proof due to Berndtsson-P\u{a}un.
\end{abstract}

\maketitle

Let $\Omega$ be a bounded pseudoconvex domain in $\mathbb{C}^n$, $S$ a closed complex submanifold of codimension $k$ and $\varphi$ a psh function on $\Omega$. For any $p>0$, denote the weighted $p$-Bergman space by
\[
A^p(\Omega,\varphi):=\left\{f\in\mathcal{O}(\Omega);\ \int_\Omega|f|^pe^{-\varphi}<\infty\right\}.
\]
The following sharp Ohsawa-Takegoshi extension theorem is presented in \cite{BL} (see also \cite{OT,Blocki,GZ}).

\begin{thm}\label{th:OT_2}
If $f$ is a holomorphic function on $S$ with $
\int_S|f|^2e^{-\varphi+kB}<\infty$, then there exists $F\in{A^2(\Omega,\varphi)}$, such that $F|_S=f$ and
\begin{equation}\label{eq:OT}
\int_\Omega|F|^2e^{-\varphi}\leq\sigma_k\int_S|f|^2e^{-\varphi+kB},
\end{equation}
where $\sigma_k$ is the volume of the unit ball in $\mathbb{C}^k$ and $B$ is a continuous function on $\Omega$ determined by the geometry of $\Omega$ and $S$.
\end{thm}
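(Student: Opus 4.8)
The plan is to produce the extension by solving a $\bar\partial$-equation against a weight that is singular along $S$, and then to pin down the sharp constant $\sigma_k$ by comparing $\Omega$ with its linearization along $S$. First I would reduce to the minimal extension: once any finite-norm extension is known to exist, the extensions of $f$ form a nonempty closed affine subspace of the Hilbert space $A^2(\Omega,\varphi)$ --- closed because $L^2$-convergence of holomorphic functions is locally uniform and hence passes to restrictions --- so there is a unique minimizer $F_{\min}$, and it suffices to bound $\int_\Omega|F_{\min}|^2e^{-\varphi}$. The element $F_{\min}$ is in any case the object the rest of the paper tracks, so the reduction costs nothing.

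To build an extension I would choose transverse coordinates $w\in\mathbb{C}^k$ with $S=\{w=0\}$ in a tube around $S$, extend $f$ crudely by a cutoff $\tilde f$ supported there, and set $F:=\tilde f-u$, where $u$ solves $\bar\partial u=\bar\partial\tilde f$. Solving against a weight $e^{-\varphi-\psi}$ whose term $\psi$ carries a $2k\log|w|$ pole along $S$ forces every finite-norm solution to satisfy $u|_S=0$, whence $F|_S=f$ and $\bar\partial F=0$. Since $\bar\partial\tilde f$ is supported where $\psi$ is bounded, the right-hand side is harmless, and the twisted Bochner--Kodaira (Donnelly--Fefferman--Berndtsson) estimate then controls $\int_\Omega|u|^2e^{-\varphi}$.

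The sharp constant is the delicate point. One route is Guan--Zhou's optimization: introduce auxiliary twist functions depending on the level of $\psi$, governed by an ODE, and choose them so that the resulting bound for $\int_\Omega|F|^2e^{-\varphi}$ tends to exactly $\sigma_k\int_S|f|^2e^{-\varphi+kB}$ as the cutoff collapses onto $S$. Conceptually cleaner is the degeneration picture: rescaling the transverse variable by $w\mapsto e^{-t}w$ deforms $\Omega$ to the total space of the normal bundle, locally the product $S\times\mathbb{C}^k$, where the minimal extension is constant in $w$ and an explicit transverse integral over the fibers --- equal, by the normalization defining the continuous function $B$, to the unit-ball volume $\sigma_k$ times $\int_S|f|^2e^{-\varphi+kB}$ --- evaluates the model exactly. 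A subharmonicity (positivity of direct image) property of the minimal norm in the degeneration parameter, in the spirit of Berndtsson--Lempert, then transfers this value back to $\Omega$ itself, yielding \eqref{eq:OT}.

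I expect the identification of the constant to be the main obstacle. In the $\bar\partial$ approach everything hinges on the exact choice of the twist functions and on taking the collapse of the cutoff onto $S$ without any loss; in the degeneration approach one must realize the family as a genuine pseudoconvex total space to which direct image positivity applies, fix the correct direction and boundary behaviour of the subharmonic comparison, and justify interchanging the minimization with the limit in the parameter. Either way, producing \emph{exactly} $\sigma_k$ rather than a larger constant is where the real work lies; the construction of \emph{some} finite-norm extension and the reduction to $F_{\min}$ are by comparison routine.
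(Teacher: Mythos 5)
First, a point of comparison: the paper does not prove Theorem \ref{th:OT_2} at all. It is quoted as a known result, with the proof attributed to Berndtsson--Lempert \cite{BL} (see also \cite{OT,Blocki,GZ}), and the note then uses it as a black box to derive the $L^p$ statement. So your attempt cannot be measured against an internal argument of the paper; it has to stand on its own as a reconstruction of the literature.

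Measured that way, it does not yet stand. Your sketch correctly names the two known routes --- the $\bar\partial$-equation against a weight with a $2k\log|w|$ singularity along $S$, with Guan--Zhou/B{\l}ocki-style ODE-optimized twist functions to recover the constant, and the Berndtsson--Lempert degeneration of $\Omega$ to the normal bundle of $S$ combined with plurisubharmonic variation of minimal extension norms (positivity of direct images) --- but every step that actually constitutes the theorem is deferred rather than executed. You never specify the twist functions or the ODE they must satisfy; you never state, let alone prove, the subharmonicity property of the minimal norm in the degeneration parameter, which is the key lemma of \cite{BL}; you never define $B$, although the asserted identity between the fiber integral in the model $S\times\mathbb{C}^k$ and $\sigma_k\int_S|f|^2e^{-\varphi+kB}$ has no content without it; and you explicitly concede that obtaining exactly $\sigma_k$, rather than some larger constant, is ``where the real work lies'' --- but that is precisely the content of the theorem, since a non-sharp constant is classical \cite{OT}. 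The parts you do carry out (uniqueness of the minimizer in the closed affine subspace of extensions, and the fact that the singular weight forces the solution of the $\bar\partial$-equation to vanish on $S$) are correct but, as you say yourself, routine. As it stands, the proposal is an accurate map of how \cite{BL} and \cite{GZ} prove the theorem, with a genuine gap at every point where the map would have to become a proof.
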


We shall not give the precise definition of $B$, because it is not used here. The interested reader may consult \cite{BL}.

An $L^p$ version ($0<p<2$) of Ohsawa-Takegoshi extension theorem also holds. It is originally proved by Berndtsson-P\u{a}un \cite{BP08,BP10} (see also \cite{GZ}). As Theorem \ref{th:OT_2}, we write the theorem as follows.

\begin{thm}\label{th:OT_p}
Let $0<p<2$. If $f$ is a holomorphic function on $S$ with $\int_\Omega|f|^pe^{-\varphi+kB}<\infty$, then there exists $F\in{A^p(\Omega,\varphi)}$, such that $F|_S=f$ and
\begin{equation}\label{eq:OT}
\int_\Omega|F|^pe^{-\varphi}\leq\sigma_k\int_S|f|^pe^{-\varphi+kB},
\end{equation}
where $\sigma_k$ is the volume of the unit ball in $\mathbb{C}^k$ and $B$ is a continuous function on $\Omega$ determined by the geometry of $\Omega$ and $S$.
\end{thm}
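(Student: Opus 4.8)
The plan is to realize the desired extension as a minimal $L^2$ extension with respect to a weight that it induces on itself, thereby reducing Theorem \ref{th:OT_p} to Theorem \ref{th:OT_2}. For a holomorphic extension $F$ of $f$ to $\Omega$, put $\Phi_F:=\varphi+(2-p)\log|F|$; since $2-p>0$ and $\log|F|$ is plurisubharmonic, $\Phi_F$ is plurisubharmonic, and $B$ is unchanged because it depends only on the geometry of $\Omega$ and $S$. Because $e^{-\Phi_F}=|F|^{p-2}e^{-\varphi}$ and $F|_S=f$, one has
\[ \int_\Omega|F|^2e^{-\Phi_F}=\int_\Omega|F|^pe^{-\varphi},\qquad \int_S|f|^2e^{-\Phi_F+kB}=\int_S|f|^pe^{-\varphi+kB}. \]
Thus the weighted $L^2$-data of $F$ for the weight $\Phi_F$ are exactly the weighted $L^p$-data appearing in Theorem \ref{th:OT_p}; in particular the second integral is finite by hypothesis, so Theorem \ref{th:OT_2} applies to $\Phi_F$ for \emph{every} such extension $F$.

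I would construct the extension by iteration. Choose a holomorphic extension $F^{(0)}$ of $f$ with $a_0:=\int_\Omega|F^{(0)}|^pe^{-\varphi}<\infty$, and let $F^{(j+1)}$ be the minimal $L^2$ extension of $f$ for the weight $\Phi_{F^{(j)}}$, whose norm does not exceed that of the extension provided by Theorem \ref{th:OT_2}, so that
\[ \int_\Omega|F^{(j+1)}|^2|F^{(j)}|^{p-2}e^{-\varphi}=\int_\Omega|F^{(j+1)}|^2e^{-\Phi_{F^{(j)}}}\le\sigma_k\int_S|f|^pe^{-\varphi+kB}=:M. \]
Factoring $|F^{(j+1)}|^pe^{-\varphi}=\big(|F^{(j+1)}|^2|F^{(j)}|^{p-2}e^{-\varphi}\big)^{p/2}\big(|F^{(j)}|^pe^{-\varphi}\big)^{(2-p)/2}$ and applying H\"older's inequality with exponents $\tfrac2p$ and $\tfrac2{2-p}$ gives, with $a_j:=\int_\Omega|F^{(j)}|^pe^{-\varphi}$, the recursion $a_{j+1}\le M^{p/2}a_j^{(2-p)/2}$. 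Writing $b_j:=a_j/M$ this becomes $b_{j+1}\le b_j^{\,1-p/2}$ with $0<1-p/2<1$, from which $\limsup_j a_j\le M$.

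The uniform bound on $a_j$ makes $\{F^{(j)}\}$ a normal family: the sub-mean-value inequality for the plurisubharmonic functions $|F^{(j)}|^p$ promotes the integral bounds to local uniform bounds. A locally uniformly convergent subsequence has a limit $F\in\mathcal{O}(\Omega)$ with $F|_S=f$, and Fatou's lemma yields $\int_\Omega|F|^pe^{-\varphi}\le\liminf_j a_j\le M$, which is exactly the estimate claimed in Theorem \ref{th:OT_p}. Conceptually the limit is a fixed point of the iteration, i.e. it is its own minimal $L^2$ extension for the weight $\Phi_F$; this is the precise relationship between minimal $L^2$ and $L^p$ extensions, and it can be read off the first-variation identity $\int_\Omega\overline F\,h\,e^{-\Phi_F}=0$ for all holomorphic $h$ with $h|_S=0$, which says precisely that $F$ is $L^2(\Omega,\Phi_F)$-orthogonal to the extensions of $0$.

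I expect the main obstacle to lie where $\Phi_F$ is singular, namely $\Phi_F=-\infty$ along the zero set of $F$. One must either invoke Theorem \ref{th:OT_2} for genuinely singular plurisubharmonic weights or regularize, replacing $\log|F|$ by $\tfrac12\log(|F|^2+\epsilon)$ — which is still plurisubharmonic, being the logarithm of the squared norm of the holomorphic map $(F,\sqrt\epsilon)$ — and then letting $\epsilon\downarrow0$ with monotone convergence in both integrals. The remaining point, producing a seed $F^{(0)}$ with $a_0<\infty$ and keeping every integral finite throughout, is handled by approximating $\Omega$ from inside by relatively compact pseudoconvex subdomains and $\varphi$ from above by smooth plurisubharmonic functions, followed by a diagonal argument; this is where the boundedness and pseudoconvexity of $\Omega$ enter, and while routine I expect it to be the most tedious part of the argument.
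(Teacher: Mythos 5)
Your proposal is correct in its essentials, but it is not the paper's proof: what you have reconstructed is essentially the original Berndtsson-P\u{a}un iteration, which this note explicitly sets out to avoid. You take an arbitrary seed extension, repeatedly apply Theorem \ref{th:OT_2} with the self-induced weight $\Phi_{F^{(j)}}=\varphi+(2-p)\log|F^{(j)}|$, and drive the $L^p$ norm down to the sharp constant via the H\"older recursion $a_{j+1}\le M^{p/2}a_j^{(2-p)/2}$, extracting a limit by normal families and Fatou; all of these steps check out. The paper instead applies Theorem \ref{th:OT_2} exactly \emph{once}, to the weight $\widetilde{\varphi}_p$ induced by a minimal $L^p$ extension $F_{p,\varphi}$, and shows by calculus of variations (the first-variation identities \eqref{eq:variation_p} and \eqref{eq:variation_2}, valid for test functions $h\in{A^2(\Omega,\widetilde{\varphi}_p)}$ vanishing on $S$, together with the inclusion $A^2(\Omega,\widetilde{\varphi}_p)\subset{A^p(\Omega,\varphi)}$ of Lemma \ref{lm:including}) that $F_{p,\varphi}$ coincides with the minimal $L^2$ extension $F_{2,\widetilde{\varphi}_p}$; the estimate then falls out in one line, since the minimal $L^2$ extension obeys the bound of Theorem \ref{th:OT_2}. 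Your approach buys elementarity: it needs no variational argument and in particular dodges the delicate case $0<p<1$, where the $L^p$ functional is not differentiable and the paper must work through Lemmas \ref{lm:including} and \ref{lm:ineq}. The paper's approach buys the absence of iteration and the stronger structural statement $F_{p,\varphi}=F_{2,\widetilde{\varphi}_p}$ (Theorem \ref{th:minimal_extension}), i.e., that the extremal itself attains the estimate. Note that your closing remark that the limit of your iteration is a fixed point satisfying $\int_\Omega\overline{F}he^{-\Phi_F}=0$ is precisely this structural statement, but your argument does not actually establish it (a locally uniform limit of the $F^{(j)}$ need not a priori be minimal for its own weight); since it is not needed for the estimate, this does not affect correctness. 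Finally, your worry about the singularity of $\Phi_F$ along $\{F=0\}$ is unnecessary: Theorem \ref{th:OT_2} is stated for arbitrary psh weights, and the paper applies it directly to the singular weight $\widetilde{\varphi}_p$, exactly as in \eqref{eq:OT_phi_tilde}, so no regularization is required.
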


Theorem \ref{th:OT_p} is proved by an ingenious iterative application of Theorem \ref{th:OT_2}. The weight function of the form $\varphi+(2-p)\log|F|$ is used, where $F$ is an extension in Theorem \ref{th:OT_2}. In this note, we shall present another proof of Theorem \ref{th:OT_p} without using iteration. It also relies on Theorem \ref{th:OT_2}, and makes use of extensions with minimal weighted $L^2$ and $L^p$ norms in Theorem \ref{th:OT_2} and Theorem \ref{th:OT_p}. Moreover, we use the calculus of variations (which is inspired by \cite{CZ}) to deduce an essential property of the minimal extensions. The argument actually works for more general settings where the above theorems hold. But we choose to follow the philosophy in \cite{BL}: restrict to the case that $\Omega$ is a domain in $\mathbb{C}^n$ in order to emphasize the basic ideas.

By exhaustion and approximation as usual, we may assume that $\varphi$ is a smooth psh function in a neighbourhood of $\overline{\Omega}$ and $f$ is the restriction to $S$ of a holomorphic function $f_0$ in a neighbourhood of $\overline{\Omega}$. Consider the set
\[
E^p(f,S,\varphi):=\{g\in{A^p(\Omega,\varphi)};\ g|_S=f\}.
\]
Clearly, $E^p(f,S,\varphi)\neq\varnothing$ (since $f_0\in{E^p(f,S)}$). It follows from a normal family argument that the minimizing problem
\[
m_p(f,S,\varphi):=\min\left\{\int_\Omega|g|^pe^{-\varphi};\ g\in{E^p(f,S)}\right\}
\]
admits minimizers. It seems reasonable to say a minimizer is a \textit{minimal extension}. Note that the minimal extension is unique when $p\geq1$ for $A^p(\Omega,\varphi)$ is strongly convex (cf. \cite{CZ}, \S 2.1). But we do not know whether it is unique when $0<p<1$. Nontheless, we may fix one minimal extension in either cases, and denote it by $F_{p,\varphi}$. Therefore, if $h\in{A^p(\Omega,\varphi)}$ and $h|_S=0$, then
\begin{equation}\label{eq:minimal_1}
\int_\Omega|F_{p,\varphi}+th|^pe^{-\varphi}\geq\int_\Omega|F_{p,\varphi}|^pe^{-\varphi},\ \ \ \forall\,t\in\mathbb{C}.
\end{equation}

Apply the calculus of variations as \cite{CZ}, we see that
\begin{equation}\label{eq:variation_p}
\int_\Omega|F_{p,\varphi}|^{p-2}\overline{F_{p,\varphi}}he^{-\varphi}=0
\end{equation}
for all $h\in{A^p(\Omega,\varphi)}$ with $h|_S=0$ when $p\geq1$. To handle the case $0<p<1$, set $\widetilde{\varphi}_p:=\varphi+(2-p)\log|F_{p,\varphi}|$. We first show that

\begin{lem}\label{lm:including}
If $0<p<2$, then $A^2(\Omega,\widetilde{\varphi}_p)\subset{A^p(\Omega,\varphi)}$.
\end{lem}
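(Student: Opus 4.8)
The plan is to unwind the definition of the weight $\widetilde{\varphi}_p$ and then reduce the whole statement to a single application of H\"older's inequality with a carefully chosen pair of conjugate exponents. First I would record that, since $\log|F_{p,\varphi}|$ is psh and in particular $F_{p,\varphi}\not\equiv0$ in the only nontrivial case $f\not\equiv0$, one has
\[
e^{-\widetilde{\varphi}_p}=|F_{p,\varphi}|^{p-2}e^{-\varphi}
\]
off the zero locus of $F_{p,\varphi}$, which is an analytic set of Lebesgue measure zero. Hence the hypothesis $g\in A^2(\Omega,\widetilde{\varphi}_p)$ reads $\int_\Omega|g|^2|F_{p,\varphi}|^{p-2}e^{-\varphi}<\infty$, while the desired conclusion $g\in A^p(\Omega,\varphi)$ reads $\int_\Omega|g|^pe^{-\varphi}<\infty$.

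The key algebraic step is the pointwise identity
\[
|g|^pe^{-\varphi}=\bigl(|g|^2|F_{p,\varphi}|^{p-2}e^{-\varphi}\bigr)^{p/2}\bigl(|F_{p,\varphi}|^pe^{-\varphi}\bigr)^{(2-p)/2},
\]
which one verifies simply by matching exponents: those of $|g|$ give $|g|^p$, those of $|F_{p,\varphi}|$ cancel to $0$, and those of $e^{-\varphi}$ sum to $-\varphi$. I would then apply H\"older's inequality with the conjugate exponents $r=2/p$ and $r'=2/(2-p)$, noting that $1/r+1/r'=p/2+(2-p)/2=1$ precisely because the two outer exponents above are $p/2$ and $(2-p)/2$. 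This yields
\[
\int_\Omega|g|^pe^{-\varphi}\le\Bigl(\int_\Omega|g|^2|F_{p,\varphi}|^{p-2}e^{-\varphi}\Bigr)^{p/2}\Bigl(\int_\Omega|F_{p,\varphi}|^pe^{-\varphi}\Bigr)^{(2-p)/2}.
\]

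Finally I would observe that both factors on the right are finite: the first equals $\bigl(\int_\Omega|g|^2e^{-\widetilde{\varphi}_p}\bigr)^{p/2}$, finite by hypothesis, and the second equals $m_p(f,S,\varphi)^{(2-p)/2}<\infty$ because $F_{p,\varphi}\in A^p(\Omega,\varphi)$ is an extension realizing the minimum. This gives $\int_\Omega|g|^pe^{-\varphi}<\infty$, as required. There is no serious analytic obstacle here; the entire argument is one H\"older estimate once the correct split is found, and the only points needing a word of care are that $\widetilde{\varphi}_p$ is genuinely defined (i.e.\ $F_{p,\varphi}\not\equiv0$, which fails only in the trivial case $f\equiv0$) and that $|F_{p,\varphi}|^{p-2}$ is finite almost everywhere, both automatic from holomorphicity. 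The conceptual content is merely the recognition that the exponent $2-p$ appearing in $\widetilde{\varphi}_p$ is chosen exactly so that the weighted $L^2$ quantity interpolates between the given $L^2$ datum and the finite $L^p$ norm of the minimal extension.
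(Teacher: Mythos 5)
Your proposal is correct and is essentially the paper's own proof: the same pointwise split $|g|^pe^{-\varphi}=\bigl(|g|^2|F_{p,\varphi}|^{p-2}e^{-\varphi}\bigr)^{p/2}\bigl(|F_{p,\varphi}|^pe^{-\varphi}\bigr)^{1-p/2}$ followed by H\"older's inequality with exponents $2/p$ and $2/(2-p)$, using finiteness of the minimal extension's $L^p$ norm. The additional remarks about the zero locus and the nontriviality of $F_{p,\varphi}$ are fine but not needed beyond what the paper implicitly assumes.
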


\begin{proof}
This follows immediately from H\"{o}lder's inequality:
\begin{align*}
\int_\Omega|f|^pe^{-\varphi}
\leq&\,\left(\int_\Omega|F_{p,\varphi}|^{p-2}|f|^2e^{-\varphi}\right)^{p/2}\left(\int_\Omega|F_{p,\varphi}|^pe^{-\varphi}\right)^{1-p/2}\\
=&\,\left(\int_\Omega|f|^2e^{-\widetilde{\varphi}_p}\right)^{p/2}\left(\int_\Omega|F_{p,\varphi}|^pe^{-\varphi}\right)^{1-p/2}.\qedhere
\end{align*}
\end{proof}

The following lemma is also elementary.

\begin{lem}\label{lm:ineq}
If\/ $0<\alpha<1$, then $(1+x)^\alpha\leq1+\alpha{x}$ for all $x\in[-1,\infty)$.
\end{lem}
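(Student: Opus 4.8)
The plan is to reduce this to the elementary fact that $t \mapsto t^\alpha$ is concave on $[0,\infty)$ when $0<\alpha<1$, which turns the desired inequality into a tangent-line estimate. Writing $f(x) = (1+x)^\alpha$, I observe that $f$ is continuous on $[-1,\infty)$, smooth on the open interval $(-1,\infty)$, with $f'(x) = \alpha(1+x)^{\alpha-1}$ and $f''(x) = \alpha(\alpha-1)(1+x)^{\alpha-2}$. Since $0<\alpha<1$, the factor $\alpha(\alpha-1)$ is negative while $(1+x)^{\alpha-2}>0$, so $f''<0$ throughout $(-1,\infty)$, i.e. $f$ is strictly concave. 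The right-hand side $1+\alpha x$ is precisely the tangent line to $f$ at $x=0$, because $f(0)=1$ and $f'(0)=\alpha$. A concave function lies on or below each of its tangent lines, which gives $(1+x)^\alpha \leq 1+\alpha x$ on $(-1,\infty)$, and the remaining boundary point $x=-1$ follows by continuity, where $(1+x)^\alpha = 0 \leq 1-\alpha$.

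If a self-contained argument avoiding the concavity-tangent principle is preferred, I would instead set $g(x) := 1 + \alpha x - (1+x)^\alpha$ and show directly that $g\geq 0$. Here $g(0)=0$ and $g'(x) = \alpha\bigl[1 - (1+x)^{\alpha-1}\bigr]$. For $x>0$ the base satisfies $1+x>1$, and since the exponent $\alpha-1$ is negative, $(1+x)^{\alpha-1}<1$, so $g'(x)>0$ and $g$ increases away from its zero at the origin. For $-1<x<0$ the base satisfies $0<1+x<1$, hence $(1+x)^{\alpha-1}>1$ and $g'(x)<0$, so $g$ decreases toward $g(0)=0$ as $x$ increases to $0$ and is therefore positive there as well. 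Thus $g$ attains its minimum value $0$ at $x=0$ and is nonnegative on all of $(-1,\infty)$, while $x=-1$ is handled by evaluating $g(-1)=1-\alpha\geq 0$.

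There is essentially no hard step here; the only point demanding a little care is the behaviour near the left endpoint $x=-1$, where $(1+x)^{\alpha-1}\to +\infty$ and the derivative formula degenerates. I would therefore carry out all monotonicity arguments on the open interval $(-1,\infty)$ and close up the endpoint separately by continuity, so that the blow-up of $f'$ at $x=-1$ never enters the reasoning. Either route completes the proof, and I would present the concavity version since it is the shortest.
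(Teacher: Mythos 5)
Your proposal is correct and is essentially the paper's own argument: the paper sets $\mu(x)=(1+x)^\alpha-1-\alpha x$ and uses $\mu(0)=0$, $\mu'(0)=0$, $\mu''<0$, which is exactly your concavity--tangent-line reasoning phrased through the difference function. Your explicit treatment of the endpoint $x=-1$, where the derivative formula degenerates, is a minor point of extra care that the paper glosses over.
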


\begin{proof}
Let $\mu(x)=(1+x)^\alpha-1-\alpha{x}$. We have $\mu(0)=0$, $\mu'(0)=0$ and $\mu''(x)<0$, which implies $\mu\leq0$.
\end{proof}

Next, we always assume that $h\in{A^2(\Omega,\widetilde{\varphi}_p)}$ and $h|_S=0$. It follows from Lemma \ref{lm:ineq} (with $\alpha=p/2$) that
\begin{align}
|F_{p,\varphi}+th|^p
=&\,\left(|F_{p,\varphi}|^2+2\mathrm{Re}\,(t\overline{F_{p,\varphi}}h)+|t|^2|h|^2\right)^{p/2}\nonumber\\
=&\,|F_{p,\varphi}|^p\left(1+2\mathrm{Re}\,(t\overline{F_{p,\varphi}}h/|F_{p,\varphi}|^2)+|t|^2|h|^2/|F_{p,\varphi}|^2\right)^{p/2}\nonumber\\
\leq&\,|F_{p,\varphi}|^p+p|F_{p,\varphi}|^{p-2}\mathrm{Re}\,(t\overline{F_{p,\varphi}}h)+\frac{p}{2}|t|^2|F_{p,\varphi}|^{p-2}|h|^2\label{eq:minimal_2}.
\end{align}
on $\Omega\setminus\{F_{p,\varphi}=0\}$. Since the Lebesgue measure of $\{F_{p,\varphi}=0\}$ is zero, we infer from \eqref{eq:minimal_1} and \eqref{eq:minimal_2} that
\[
2\int_\Omega|F_{p,\varphi}|^{p-2}\mathrm{Re}\,(t\overline{F_{p,\varphi}}h)e^{-\varphi}+|t|^2\int_\Omega|F_{p,\varphi}|^{p-2}|h|^2e^{-\varphi}\geq0,\ \ \ \forall\,t\in\mathbb{C}.
\]
Letting $t\rightarrow0$ along the real and imagery axis, we obtain \eqref{eq:variation_p} for all $h\in{A^2(\Omega,\widetilde{\varphi}_p)}$ with $h|_S=0$ when $0<p<1$ (and hence when $0<p<2$ in view of Lemma \ref{lm:including}). 

Similarly, set
\[
E^2(f,S,\widetilde{\varphi}_p):=\{g\in{A^2(\Omega,\widetilde{\varphi}_p)};\ g|_S=f\}.
\]
Note that $\widetilde{\varphi}_p$ is \textit{not} smooth when $F_{p,\varphi}$ has zeros. Thus it is not clear whether $f_0\in{E^2(f,S,\widetilde{\varphi}_p)}$. But when $0<p<2$, $\widetilde{\varphi}_p$ is still psh. It is a consequence of the Theorem \ref{th:OT_2} that $E^2(f,S,\widetilde{\varphi}_p)\neq\varnothing$. More precisely, we have
\begin{equation}\label{eq:f_p_2}
\int_S|f|^2e^{-\widetilde{\varphi}_p+kB}=\int_S|F_{p,\varphi}|^{p-2}|f|^2e^{-\varphi+kB}=\int_S|f|^pe^{-\varphi+kB}<\infty
\end{equation}
for $F_{p,\varphi}=f$ on $S$. Thus Theorem \ref{th:OT_2} yields an extension $g_0\in{A^2(\Omega,\widetilde{\varphi}_p)}$ with
\begin{equation}\label{eq:OT_phi_tilde}
\int_\Omega|g_0|^2e^{-\widetilde{\varphi}_p}\leq\sigma_k\int_S|f|^2e^{-\widetilde{\varphi}_p+kB}<\infty,
\end{equation}
so that $g_0\in{E^2(f,S,\widetilde{\varphi}_p)}$.

One can proceed analogously to obtain a minimal extension $F_{2,\widetilde{\varphi}_p}\in{A^2(\Omega,\widetilde{\varphi}_p)}$, such that
\[
\int_\Omega|F_{2,\widetilde{\varphi}_p}+th|^2e^{-\varphi}\geq\int_\Omega|F_{2,\widetilde{\varphi}_p}|^2e^{-\varphi},\ \ \ \forall\,t\in\mathbb{C}.
\]
when $h\in{A^2(\Omega,\widetilde{\varphi}_p)}$ and $h|_S=0$. For such $h$, we have
\begin{equation}\label{eq:variation_2}
\int_\Omega\overline{F_{2,\widetilde{\varphi}_p}}he^{-\widetilde{\varphi}_p}=\int_\Omega|F_{p,\varphi}|^{p-2}\overline{F_{2,\widetilde{\varphi}_p}}he^{-\varphi}=0.
\end{equation}
Thus \eqref{eq:variation_p} and \eqref{eq:variation_2} imply that
\begin{equation}\label{eq:variation_difference}
\int_\Omega|F_{p,\varphi}|^{p-2}\overline{F_{p,\varphi}-F_{2,\widetilde{\varphi}_p}}h=0
\end{equation}
for all $h\in{A^2(\Omega,\widetilde{\varphi}_p)}$ with $h|_S=0$. Since
\[
\int_\Omega|F_{p,\varphi}|^2e^{-\widetilde{\varphi}_p}=\int_\Omega|F_{p,\varphi}|^pe^{-\varphi}<\infty,
\]
i.e., $F_{p,\varphi}\in{A^2(\Omega,\widetilde{\varphi}_p)}$, we may take $h=F_{p,\varphi}-F_{2,\widetilde{\varphi}_p}$ in \eqref{eq:variation_difference}, which gives
\begin{equation}\label{eq:relation_minimal_extension}
F_{p,\varphi}=F_{2,\widetilde{\varphi}_p}.
\end{equation}
This together with \eqref{eq:f_p_2} and \eqref{eq:OT_phi_tilde} imply that
\begin{align*}
\int_\Omega|F_{p,\varphi}|^pe^{-\varphi}
=&\,\int_\Omega|F_{p,\varphi}|^2e^{-\widetilde{\varphi}_p}=\int_\Omega|F_{2,\widetilde{\varphi}_p}|^2e^{-\widetilde{\varphi}_p}\leq\sigma_k\int_S|f|^2e^{-\widetilde{\varphi}_p+kB}=\sigma_k\int_S|f|^pe^{-\varphi+kB},
\end{align*}
which is the desired estimate and proves Theorem \ref{th:OT_p}.

Note that the relationship \eqref{eq:relation_minimal_extension} holds when $f,\varphi,\Omega$ and $S$ satisfy some additional conditions. Indeed, in the above proof, these conditions are only used to guarantee that $E^p(f,S,\varphi)\neq\varnothing$ (i.e., $f_0\in{E^p(f,S,\varphi)}$). If we are allowed to use Theorem \ref{th:OT_p}, then $E^p(f,S,\varphi)\neq\varnothing$ as long as $\int_S|f|^pe^{-\varphi+kB}<\infty$. Thus we have proved that

\begin{thm}\label{th:minimal_extension}
Let $0<p<2$. Let $\Omega$ be a bounded pseudoconvex domain in $\mathbb{C}^n$, $S$ a complex submanifold of codimension $k$ and $\varphi$ a psh function on $\Omega$. If $f$ is a holomorphic function on $S$ with $\int_S|f|^pe^{-\varphi+kB}<\infty$, then any minimimal extension $F_{p,\varphi}\in{A^p(\Omega,\varphi)}$ of $f$ satisfies
\[
F_{p,\varphi}=F_{2,\widetilde{\varphi}_p},
\]
where $\widetilde{\varphi}_p=\varphi+(2-p)\log|F_{p,\varphi}|$
\end{thm}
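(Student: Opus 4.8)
The plan is to observe that the required equality $F_{p,\varphi}=F_{2,\widetilde{\varphi}_p}$ is exactly \eqref{eq:relation_minimal_extension}, which was already established in the course of proving Theorem \ref{th:OT_p}, and then to verify that that derivation invoked the auxiliary regularity hypotheses (smoothness of $\varphi$, neighbourhood-extendability of $f$) only to guarantee $E^p(f,S,\varphi)\neq\varnothing$. So the first step is to record that, under the present hypothesis $\int_S|f|^pe^{-\varphi+kB}<\infty$, one may now invoke Theorem \ref{th:OT_p} itself to produce some $F\in A^p(\Omega,\varphi)$ with $F|_S=f$; this shows $E^p(f,S,\varphi)\neq\varnothing$ directly, without assuming that $f$ extends holomorphically across $\overline{\Omega}$ or that $\varphi$ is smooth.

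Once nonemptiness is in hand, the normal-family argument yields a minimal extension $F_{p,\varphi}$ (any one of them, when $0<p<1$), and I would then re-run the variational computation verbatim. Concretely, the pointwise inequality \eqref{eq:minimal_2} coming from Lemma \ref{lm:ineq}, the vanishing of the Lebesgue measure of $\{F_{p,\varphi}=0\}$, and the minimality \eqref{eq:minimal_1} together give \eqref{eq:variation_p} for every $h\in A^2(\Omega,\widetilde{\varphi}_p)$ with $h|_S=0$; none of these steps uses smoothness of $\varphi$ or neighbourhood-extendability of $f$. Since $\widetilde{\varphi}_p=\varphi+(2-p)\log|F_{p,\varphi}|$ is psh and \eqref{eq:f_p_2} shows $\int_S|f|^2e^{-\widetilde{\varphi}_p+kB}=\int_S|f|^pe^{-\varphi+kB}<\infty$, Theorem \ref{th:OT_2} applies to the weight $\widetilde{\varphi}_p$ and gives $E^2(f,S,\widetilde{\varphi}_p)\neq\varnothing$, whose associated minimal $L^2$ extension $F_{2,\widetilde{\varphi}_p}$ satisfies \eqref{eq:variation_2}.

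Subtracting \eqref{eq:variation_p} from \eqref{eq:variation_2} produces \eqref{eq:variation_difference}, and the decisive final move is to note that $F_{p,\varphi}\in A^2(\Omega,\widetilde{\varphi}_p)$ (because $\int_\Omega|F_{p,\varphi}|^2e^{-\widetilde{\varphi}_p}=\int_\Omega|F_{p,\varphi}|^pe^{-\varphi}<\infty$) and that $(F_{p,\varphi}-F_{2,\widetilde{\varphi}_p})|_S=0$, so that $h=F_{p,\varphi}-F_{2,\widetilde{\varphi}_p}$ is an admissible test function. Substituting it forces $\int_\Omega|F_{p,\varphi}|^{p-2}|F_{p,\varphi}-F_{2,\widetilde{\varphi}_p}|^2=0$, whence the two holomorphic functions coincide.

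I expect the only point requiring genuine care — rather than a serious obstacle — to be the bookkeeping that every ingredient of the original derivation survives the weakening of hypotheses: that the minimizer $F_{p,\varphi}$ still exists and lies in $A^2(\Omega,\widetilde{\varphi}_p)$, that $\widetilde{\varphi}_p$ remains psh so Theorem \ref{th:OT_2} is legitimately applicable to this possibly non-smooth weight, and that the test class $\{h\in A^2(\Omega,\widetilde{\varphi}_p);\,h|_S=0\}$ is rich enough to contain $F_{p,\varphi}-F_{2,\widetilde{\varphi}_p}$. Crucially, because the statement explicitly grants the use of Theorem \ref{th:OT_p}, the potential circularity — needing $L^p$ existence to prove the very relation that underlies the $L^p$ existence proof — is avoided, and the argument closes.
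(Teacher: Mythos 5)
Your proposal is correct and follows the paper's own route exactly: the paper likewise invokes Theorem \ref{th:OT_p} to secure $E^p(f,S,\varphi)\neq\varnothing$ under the sole hypothesis $\int_S|f|^pe^{-\varphi+kB}<\infty$, observes that the regularity assumptions (smooth $\varphi$, extendability of $f$) entered the earlier argument only through that nonemptiness, and then reuses the variational derivation of \eqref{eq:variation_p}, \eqref{eq:variation_2}, and \eqref{eq:variation_difference} with the test function $h=F_{p,\varphi}-F_{2,\widetilde{\varphi}_p}$ to conclude \eqref{eq:relation_minimal_extension}. Your attention to the admissibility of that test function and to the non-circularity of invoking Theorem \ref{th:OT_p} matches the paper's reasoning precisely.
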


\end{document}